\documentclass[]{interact}

\usepackage[numbers,sort&compress]{natbib}
\bibpunct[, ]{[}{]}{,}{n}{,}{,}

\makeatletter
\def\NAT@def@citea{\def\@citea{\NAT@separator}}
\makeatother

\usepackage{amsthm,amssymb,enumerate,mathtools,lmodern}

\usepackage{geometry}
\geometry{top=1.1in,bottom=1.2in,left=1.2in,right=1.2in}

\usepackage[colorlinks]{hyperref}

\theoremstyle{plain}
\newtheorem{theorem}{Theorem}[section]
\newtheorem{lemma}[theorem]{Lemma}
\newtheorem{corollary}[theorem]{Corollary}

\theoremstyle{definition}
\newtheorem{definition}[theorem]{Definition}

\theoremstyle{remark}

\newcommand{\N}{\mathbb{N}}
\newcommand{\R}{\mathbb{R}}
\newcommand{\bb}[1]{\boldsymbol{#1}}
\newcommand{\leqdef}{\vcentcolon=}
\newcommand{\rd}{{\rm d}}
\newcommand{\tmean}{\bar{t}}

\allowdisplaybreaks

\begin{document}

\articletype{ARTICLE}

\title{Complete monotonicity of a ratio of gamma functions and some combinatorial inequalities for multinomial coefficients}

\author{
\name{Fr\'ed\'eric Ouimet \thanks{Email: ouimetfr@caltech.edu}}
\affil{California Institute of Technology, Pasadena, 91125, USA.}
}

\maketitle

\begin{abstract}
    For $m,n\in \N$, let $0 < \alpha_i,\beta_j,\lambda_{ij} \leq 1$ be such that $\sum_{j=1}^n \lambda_{ij} = \alpha_i$, $\sum_{i=1}^m \lambda_{ij} = \beta_j$, and \\[-3.2mm]

    \noindent
    $\sum_{i=1}^m \alpha_i = \sum_{j=1}^n \beta_j \leq 1$.
    We prove that the ratio of gamma functions
    \vspace{-2mm}
    \begin{equation*}
        \hspace{-15mm}t \mapsto \frac{\prod_{i=1}^m \Gamma(\alpha_i t + 1) \prod_{j=1}^n \Gamma(\beta_j t + 1)}{\prod_{i=1}^m \prod_{j=1}^n \Gamma(\lambda_{ij} t + 1)}
    \end{equation*}
    is logarithmically completely monotonic on $(0,\infty)$.
    This result complements the logarithmically complete monotonicity of multinomial probabilities shown in
    [F.\ Ouimet (2018), Complete monotonicity of multinomial probabilities and its
    application to Bernstein estimators on the simplex, J.\ Math.\ Anal.\ Appl., 466(2), 1609-1617, \href{http://www.ams.org/mathscinet-getitem?mr=MR3825458}{MR3825458}],
    [F.\ Qi, D.-W.\ Niu, D.\ Lim, \& B.-N.\ Guo (2018), Some logarithmically completely monotonic functions and inequalities
	for multinomial coefficients and multivariate beta functions, Preprint, 1-13, \href{https://hal.archives-ouvertes.fr/hal-01769288}{hal-01769288}],
    and the recent survey of
    [F.\ Qi \& R.\ P.\ Argawal (2019), On complete monotonicity for several classes of functions
    related to ratios of gamma functions, J.\ Inequal.\ Appl., Paper No.\ 36, 42 pp, \href{http://www.ams.org/mathscinet-getitem?mr=MR3908972}{MR3908972}]
    on the complete monotonicity of functions related to ratios of gamma functions.
    As a consequence of the log-convexity, we obtain new combinatorial inequalities for multinomial coefficients.
    \phantom{\cite{MR3825458,hal-01769288,MR3908972}}
\end{abstract}

\begin{keywords}
    Laplace transform; logarithmically complete monotonicity; multinomial coefficient; complete monotonicity; gamma function; digamma function; special function; combinatorial inequality \\
    {\it 2010 MSC: } Primary : 26A48; Secondary : 05A20; 26D07; 33B15; 44A10
\end{keywords}

\vspace{-7mm}
\section{Introduction}

Completely monotonic functions on $(0,\infty)$ are non-negative functions for which derivatives of all orders exist on $(0,\infty)$ and alternate in sign (starting with the negative sign).
Typical examples are $t^{-1}$, $(1 + t)^{-1}$, $e^{-t}$, etc.
A famous theorem of \cite{MR1555269} shows that the set of completely monotonic functions $\phi : [0,\infty)\to \R$ such that $\phi(0) = 1$ coincides with the set of Laplace transforms, see e.g.\ Section XIII.4 of \cite{MR0270403} for a simpler proof.
For a classic introduction to the theory of Laplace transforms, we refer the reader to \cite{MR0005923}.
For a survey on the complete monotonicity of functions related to ratios of gamma functions, see \cite{MR3460546,MR3908972}.

\newpage
Below are the formal definitions of complete monotonicity and logarithmically complete monotonicity that we use.

\begin{definition}\label{def:complete.monotonicity}
    A function $t \mapsto g(t)$ is said to be completely monotonic on $(0,\infty)$ if $g$ has derivatives of all orders and satisfies
    \begin{equation}\label{eq:def:complete.monotonicity}
        (-1)^k g^{(k)}(t) \geq 0, \quad \text{for all } k\in \N_0, ~t\in (0,\infty).
    \end{equation}
\end{definition}

\begin{definition}\label{def:logarithmic.complete.monotonicity}
    A function $t \mapsto g(t)$ is said to be logarithmically completely monotonic on $(0,\infty)$ if $(-\log g)'$ is completely monotonic on $(0,\infty)$.
\end{definition}

It turns out that logarithmically completely monotonic functions are completely monotonic, see e.g.\ \cite[p.83]{MR0072370}.

\begin{lemma}\label{lem:Alzer.2018.lemma.2}
    Let $g : (0,\infty) \rightarrow (0,\infty)$.
    If $(-\log g)'$ is completely monotonic on $(0,\infty)$, then $g$ is completely monotonic on $(0,\infty)$.
\end{lemma}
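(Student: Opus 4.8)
The plan is to set $f \leqdef -\log g$ and to prove, by induction on $n \in \N_0$, the identity
\[
    (-1)^n g^{(n)}(t) = g(t)\, h_n(t), \qquad t \in (0,\infty),
\]
where $h_n$ is a polynomial, \emph{with nonnegative coefficients}, in the functions $d_j \leqdef (-1)^j f^{(j+1)}$, $j \geq 0$. (Smoothness of $g$ is dispatched first: since $f' = -(\log g)'$ is completely monotonic it is, by Definition~\ref{def:complete.monotonicity}, of class $C^\infty$ on $(0,\infty)$; hence $g'/g = -f'$ is $C^\infty$, and bootstrapping on the order of differentiability in $g' = (g'/g)\, g$ yields $g \in C^\infty((0,\infty))$, so all the derivatives above make sense.) By hypothesis $d_j = (-1)^j (f')^{(j)} \geq 0$ on $(0,\infty)$ for every $j$, and $g > 0$, so the displayed identity forces $(-1)^n g^{(n)} = g\, h_n \geq 0$, which is exactly what Definition~\ref{def:complete.monotonicity} requires.

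For the induction, take $h_0 \equiv 1$. Using $g' = -f' g = -d_0 g$ and differentiating the identity for $n$ gives $(-1)^n g^{(n+1)} = g' h_n + g\, h_n'$, hence $(-1)^{n+1} g^{(n+1)} = -(g' h_n + g\, h_n') = g\,(d_0 h_n - h_n')$, so that $h_{n+1} = d_0 h_n - h_n'$. The key computation is $d_j' = ((-1)^j f^{(j+1)})' = (-1)^j f^{(j+2)} = -d_{j+1}$; consequently, differentiating a monomial $\prod_j d_j^{m_j}$ produces a sum in which each term has one factor $d_j$ replaced by $-d_{j+1}$, so $-h_n'$ is again a polynomial in $d_0, d_1, d_2, \dots$ with nonnegative coefficients. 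Since $d_0 h_n$ visibly has nonnegative coefficients, so does $h_{n+1} = d_0 h_n - h_n'$, and the induction closes.

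I do not anticipate a genuine obstacle: the argument is a sign-tracking bookkeeping induction, and the only delicate point is checking that the particular combination $d_0 h_n - h_n'$ coming out of the product rule is the one in which no sign cancellation can occur — which is precisely what the relation $d_j' = -d_{j+1}$ guarantees. A slicker but essentially equivalent route is to apply Fa\`a di Bruno's formula to $g = e^{-f}$, namely $g^{(n)} = g \sum_{k=1}^{n} B_{n,k}(-f',-f'',\dots,-f^{(n-k+1)})$ with $B_{n,k}$ the partial Bell polynomials: since $-f^{(i)}$ has the sign of $(-1)^i$, every monomial of $B_{n,k}$ evaluated there carries the sign $(-1)^{\sum_i i\, m_i} = (-1)^n$, whence $(-1)^n g^{(n)} \geq 0$ at once. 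I would present the self-contained recursion as the proof and mention the Bell-polynomial viewpoint only in passing.
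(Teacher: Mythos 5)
Your proposal is correct, but note that the paper does not actually prove Lemma \ref{lem:Alzer.2018.lemma.2}: it simply quotes the classical fact that logarithmically completely monotonic functions are completely monotonic and cites \cite[p.83]{MR0072370}. So your argument is a self-contained substitute for a citation rather than a variant of an in-paper proof. What you give is the standard proof of this fact, and it is sound: the smoothness bootstrap from $g' = -f'g$ is fine, the recursion $h_{n+1} = d_0 h_n - h_n'$ together with $d_j' = -d_{j+1}$ does guarantee that no sign cancellation can occur, and the Fa\`a di Bruno/Bell-polynomial remark is an equally valid one-line version (each monomial of $B_{n,k}$ evaluated at $-f',\dots,-f^{(n-k+1)}$ carries the sign $(-1)^{\sum_i i\,m_i}=(-1)^n$). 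The only point worth tightening in the write-up is that ``$h_n$ is a polynomial with nonnegative coefficients in the $d_j$'' should be phrased as an induction hypothesis about a \emph{chosen} polynomial expression $P_n(x_0,x_1,\dots)$ with nonnegative coefficients satisfying $h_n = P_n(d_0,d_1,\dots)$, with $h_n'$ computed by the chain rule as the image of $P_n$ under the derivation $x_j \mapsto -x_{j+1}$; since the functions $d_j$ could conceivably satisfy algebraic relations, nonnegativity of coefficients is a property of the expression, not of the function. This is clearly what you intend, and with that phrasing the induction closes rigorously; what your approach buys over the paper's is a short, elementary, fully self-contained justification of the lemma, at the cost of a little bookkeeping the paper avoids by deferring to the literature.
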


\vspace{-7mm}
\subsection{Structure of the paper}

In the next section, we state and prove our main result (Theorem \ref{thm:completely.monotonic}), then we deduce new combinatorial inequalities in Corollary \ref{cor:combinatorial.inequality}.
In the appendix, the reader will find a technical inequality (and its proof) which is the key step in the proof of Theorem \ref{thm:completely.monotonic}.

\vspace{-4mm}
\section{Main result}\label{sec:main.result}

\vspace{-1mm}
\begin{theorem}\label{thm:completely.monotonic}
    For $m,n\in \N$, let $0 < \alpha_i,\beta_j,\lambda_{ij} \leq 1$ be such that
    \vspace{-3mm}
    \begin{equation*}
        \sum_{j=1}^n \lambda_{ij} = \alpha_i, \qquad \sum_{i=1}^m \lambda_{ij} = \beta_j,
    \end{equation*}
    
    \vspace{-4mm}
    \noindent
    and

    \vspace{-7.5mm}
    \begin{equation*}
        \sum_{i=1}^m \sum_{j=1}^n \lambda_{ij} = \sum_{i=1}^m \alpha_i = \sum_{j=1}^n \sum_{i=1}^m \lambda_{ij} = \sum_{j=1}^n \beta_j \leq 1.
    \end{equation*}
    Then, the function
    \vspace{-4mm}
    \begin{equation}\label{eq:thm:completely.monotonic}
        g(t) = \frac{\prod_{i=1}^m \Gamma(\alpha_i t + 1) \prod_{j=1}^n \Gamma(\beta_j t + 1)}{\prod_{i=1}^m \prod_{j=1}^n \Gamma(\lambda_{ij} t + 1)}
    \end{equation}

    \vspace{-1mm}
    \noindent
    is logarithmically completely monotonic on $(0,\infty)$, where $\Gamma$ denotes the classical Euler's gamma function, which is defined by $\Gamma(z) \leqdef \int_0^{\infty} u^{z-1} e^{-u} \rd u$ for $z > 0$.
    In particular, Lemma \ref{lem:Alzer.2018.lemma.2} implies that $g$ is completely monotonic on $(0,\infty)$.
\end{theorem}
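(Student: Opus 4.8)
The plan is to verify Definition~\ref{def:logarithmic.complete.monotonicity} directly: I will show that $(-\log g)'$ is completely monotonic on $(0,\infty)$, after which the last assertion of the theorem is immediate from Lemma~\ref{lem:Alzer.2018.lemma.2}. Writing $\psi = \Gamma'/\Gamma$ for the digamma function and differentiating the logarithm of $g$ in \eqref{eq:thm:completely.monotonic} term by term gives
\begin{equation*}
    (-\log g)'(t) \;=\; \sum_{i=1}^m\sum_{j=1}^n \lambda_{ij}\,\psi(\lambda_{ij}t+1) \;-\; \sum_{i=1}^m \alpha_i\,\psi(\alpha_i t+1) \;-\; \sum_{j=1}^n \beta_j\,\psi(\beta_j t+1),
\end{equation*}
so everything reduces to controlling the sign of every derivative of a fixed linear combination of the functions $t\mapsto a\,\psi(at+1)$, $0<a\le1$, whose coefficients obey the summation constraints of the hypothesis.

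The main device is a Laplace-type integral representation of the digamma function, e.g.\ $\psi(z) = -\gamma + \int_0^\infty \frac{e^{-u}-e^{-zu}}{1-e^{-u}}\,\rd u$ for $z>0$. Putting $z=at+1$, using $e^{-(at+1)u}=e^{-u}e^{-atu}$, and rescaling $u\mapsto u/a$ yields $a\,\psi(at+1) = -\gamma a + \int_0^\infty \frac{1-e^{-tu}}{e^{u/a}-1}\,\rd u$. Substituting this for each summand above, the constants combine (using $\sum_i\alpha_i = \sum_j\beta_j = \sum_{i,j}\lambda_{ij}$) and
\begin{equation*}
    (-\log g)'(t) \;=\; c \;+\; \int_0^\infty (1-e^{-tu})\,\Phi(u)\,\rd u, \qquad \Phi(u) \;=\; \sum_{i,j}\frac{1}{e^{u/\lambda_{ij}}-1} - \sum_i\frac{1}{e^{u/\alpha_i}-1} - \sum_j\frac{1}{e^{u/\beta_j}-1},
\end{equation*}
with $c$ an explicit constant, so that $(-\log g)^{(k+1)}(t) = (-1)^{k+1}\int_0^\infty u^{k}e^{-tu}\,\Phi(u)\,\rd u$ for $k\ge1$. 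By the characterization of completely monotonic functions as Laplace transforms recalled in the Introduction --- the same fact underlying Lemma~\ref{lem:Alzer.2018.lemma.2} --- the infinitely many alternating-sign conditions of Definition~\ref{def:complete.monotonicity} for $(-\log g)'$ then collapse to one \emph{pointwise} statement: that $\Phi$ has the appropriate fixed sign on $(0,\infty)$, together with routine checks on $c$ and on $\lim_{t\to\infty}(-\log g)'(t)$ in which the normalization $\sum_i\alpha_i\le1$ is used.

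That pointwise inequality for $\Phi$ is the main obstacle; it is precisely the ``technical inequality'' deferred to the appendix. I would prove it by fixing $u>0$ and analyzing the one-variable function $f(x) = (e^{u/x}-1)^{-1}$ on $(0,1]$: writing $\sum_i f(\alpha_i) - \sum_{i,j} f(\lambda_{ij}) = \sum_i\bigl[f\bigl(\sum_j\lambda_{ij}\bigr) - \sum_j f(\lambda_{ij})\bigr]$ and using the row/column structure $\sum_j\lambda_{ij}=\alpha_i$, $\sum_i\lambda_{ij}=\beta_j$, the desired sign of $\Phi$ should reduce to a convexity/superadditivity property of $f$ (note $f(0^{+})=0$), exploited together with the constraint $\sum_{i,j}\lambda_{ij}\le1$. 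Establishing the required convexity of $f$ --- equivalently, a sign condition on an associated combination of polygamma values --- rigorously and uniformly in $u>0$ is the delicate computational heart of the argument. Once it is in place, Theorem~\ref{thm:completely.monotonic} follows; moreover the by-product that $\log g$ is convex (the $k=1$ case of the monotonicity conditions, i.e.\ $(\log g)''\ge0$) yields, upon specializing $t$ to positive integers, the combinatorial inequalities for multinomial coefficients of Corollary~\ref{cor:combinatorial.inequality}.
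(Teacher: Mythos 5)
Your architecture is the same as the paper's: set $h=-\log g$, turn the derivatives of $h'$ into Laplace transforms of the fixed combination $\Phi(u)=\sum_{i,j}(e^{u/\lambda_{ij}}-1)^{-1}-\sum_i(e^{u/\alpha_i}-1)^{-1}-\sum_j(e^{u/\beta_j}-1)^{-1}$ (the paper does this through the representation of $\psi'$ rather than $\psi$, a cosmetic difference), and reduce every sign condition of order $k\ge 1$ to the pointwise inequality $\Phi\le 0$, which is exactly the appendix Lemma \ref{lem:Alzer.generalization}. For that lemma your route differs from the paper's and is in fact simpler: the paper fixes the margins, shows the Hessian of $-\mathcal{J}$ in the free entries $(u_{ij})_{1\le i\le m-1,\,1\le j\le n-1}$ is positive definite, and evaluates $\mathcal{J}$ at extreme points, whereas your superadditivity idea closes in two lines once one has the convexity of $c\mapsto (e^{u/c}-1)^{-1}$, which is precisely the displayed computation \eqref{eq:lem:Alzer.generalization.Alzer.computation}: since $f(0^{+})=0$ and $f$ is convex, $f(\alpha_i)=f\big(\sum_j\lambda_{ij}\big)\ge\sum_j f(\lambda_{ij})$, hence $\sum_i f(\alpha_i)\ge\sum_{i,j}f(\lambda_{ij})$ and $\mathcal{J}\ge\sum_j f(\beta_j)>0$, with no Hessian analysis at all. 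So the "delicate computational heart" you worry about is a short known calculation, and this part of your plan is sound.

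The genuine gap is the order-zero condition $h'\ge 0$, which you wave off as ``routine checks on $c$ and on $\lim_{t\to\infty}(-\log g)'(t)$''. It is not routine, and as stated it cannot be completed: since $\psi(at+1)=\log t+\log a+o(1)$ and $\sum_i\alpha_i=\sum_j\beta_j=\sum_{i,j}\lambda_{ij}=:S>0$, the $\log t$ contributions to $h'(t)=-\sum_i\alpha_i\psi(\alpha_i t+1)-\sum_j\beta_j\psi(\beta_j t+1)+\sum_{i,j}\lambda_{ij}\psi(\lambda_{ij}t+1)$ add up to $(S-S-S)\log t=-S\log t$, so $h'(t)\to-\infty$ and $h'$ is eventually negative; equivalently, in your representation $\Phi(u)\sim -S/u$ as $u\to 0^{+}$, so $\int_0^\infty(1-e^{-tu})\Phi(u)\,\rd u\to-\infty$ as $t\to\infty$ and no constant $c$ can rescue the sign. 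Concretely, $m=n=2$ with $\lambda_{ij}=1/4$ gives $g(t)=\big(\Gamma(t/2+1)/\Gamma(t/4+1)\big)^4$, which rises from $g(0)=1$ to $g(4)=16$, so $g$ is not even non-increasing. Be aware that the paper's own proof stumbles at exactly this point: the decomposition \eqref{eq:h.prime.decomposition} silently drops the $-S\log t$ term, which is what makes the limit in \eqref{eq:end} appear finite and non-negative. What survives of your argument (and of the paper's) is the $k\ge 1$ part, i.e.\ the strict log-convexity of $g$ and the strict decrease of $h'$; the full (logarithmically) complete monotonicity claim would require changing $g$ or the hypotheses so that the logarithmic terms cancel, not a routine check.
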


\begin{proof}[Proof of Theorem \ref{thm:completely.monotonic}]
    Define $h(t) \leqdef -\log g(t)$.
    We have
    \vspace{-1mm}
    \begin{equation}\label{eq:h.prime}
        \begin{aligned}
            h'(t)
            &= - \sum_{i=1}^m \alpha_i \psi(\alpha_i t + 1) - \sum_{j=1}^n \beta_j \psi(\beta_j t + 1) + \sum_{i=1}^m \sum_{j=1}^n \lambda_{ij} \psi(\lambda_{ij} t + 1),
        \end{aligned}
    \end{equation}
    where $\psi \leqdef (\log \Gamma)' = \Gamma' / \Gamma$ is the digamma function.
    Using the integral representation
    \vspace{-1mm}
    \begin{equation}
        \psi'(z) = \int_0^{\infty} \frac{u e^{-(z - 1)u}}{e^u - 1} \rd u, \quad z\in (0,\infty),
    \end{equation}
    
    \newpage
    \noindent
    see \cite[p.260]{MR0167642}, we obtain
    \begin{align}
        h''(t)
        &= - \sum_{i=1}^m \alpha_i^2 \psi'(\alpha_i t + 1) - \sum_{j=1}^n \beta_j^2 \psi'(\beta_j t + 1) + \sum_{i=1}^m \sum_{j=1}^n \lambda_{ij}^2 \psi'(\lambda_{ij} t + 1) \notag \\[2mm]
        &= - \sum_{i=1}^m \int_0^{\infty} \frac{\alpha_i u \hspace{0.2mm} e^{-\alpha_i u \hspace{0.2mm} t}}{e^u - 1} \alpha_i \rd u - \sum_{j=1}^n \int_0^{\infty} \frac{\beta_j u \hspace{0.2mm} e^{-\beta_j u \hspace{0.2mm} t}}{e^u - 1} \beta_j \rd u + \sum_{i=1}^m \sum_{j=1}^n \int_0^{\infty} \frac{\lambda_{ij} u \hspace{0.2mm} e^{-\lambda_{ij} u \hspace{0.2mm} t}}{e^u - 1} \lambda_{ij} \rd u \notag \\[2mm]
        &= - \int_0^{\infty} se^{-s \hspace{0.2mm} t} \mathcal{J}_{(\lambda_{ij}/C)}(e^{s/C}) \rd s,
    \end{align}
    where $C \leqdef \sum_{i=1}^m \sum_{j=1}^n \lambda_{ij}$ and $\mathcal{J}_{(u_{ij})}(y)$ is defined in \eqref{eq:lem:Alzer.generalization}.
    By Lemma \ref{lem:Alzer.generalization}, for all $k\in \N$ and $t\in (0,\infty)$,
    \begin{equation}\label{eq:prop:completely.monotonic.almost.equation}
        (-1)^k h^{(k+1)}(t) = \int_0^{\infty} s^k e^{- s \hspace{0.2mm} t} \mathcal{J}_{(\lambda_{ij}/C)}(e^{s/C}) \rd s > 0.
    \end{equation}
    Since $h'$ is decreasing, we show that $\lim_{t\to\infty} h'(t) \geq 0$ to conclude the proof.

    If we apply the recurrence formula
    \begin{equation}
        \psi(z + 1) = \psi(z) + \frac{1}{z}, \quad z\in (0,\infty),
    \end{equation}
    see \cite[p.258]{MR0167642}, we obtain from \eqref{eq:h.prime} the representation
    \begin{equation}\label{eq:h.prime.decomposition}
        \begin{aligned}
            h'(t)
            &= \frac{- m - n + mn}{t} - \sum_{i=1}^m \alpha_i R(\alpha_i t) - \sum_{j=1}^n \beta_j R(\beta_j t) + \sum_{i=1}^m \sum_{j=1}^n \lambda_{ij} R(\lambda_{ij} t) \\
            &\quad - \sum_{i=1}^m \alpha_i \log(\alpha_i) - \sum_{j=1}^n \beta_j \log(\beta_j) + \sum_{i=1}^m \sum_{j=1}^n \lambda_{ij} \log(\lambda_{ij}),
        \end{aligned}
    \end{equation}
    where $R(z) \leqdef \psi(z) - \log z$.
    Using the asymptotic formula
    \begin{equation}
        R(z) = -\frac{1}{2z} - \frac{1}{12z^2} + O(z^{-4}), \quad \text{as } z\to \infty,
    \end{equation}
    see \cite[p.259]{MR0167642}, all the terms on the first line on the right-hand side of \eqref{eq:h.prime.decomposition} converge to $0$ as $t\to \infty$.
    Since $\sum_{j=1}^n \lambda_{ij} = \alpha_i > 0$ and $\sum_{j=1}^n \beta_j \leq 1$, Jensen's inequality applied to $- \log(\cdot)$ yields
    \vspace{-3mm}
    \begin{align}\label{eq:end}
        \lim_{t\to\infty} h'(t)
        &= - \sum_{i=1}^m \alpha_i \sum_{j=1}^n (\lambda_{ij} / \alpha_i) \log\bigg(\frac{\beta_j}{\lambda_{ij} / \alpha_i}\bigg) \notag \\[0.5mm]
        &\geq - \sum_{i=1}^m \alpha_i \log\Big(\sum_{j=1}^n \beta_j\Big) \notag \\[2.5mm]
        &\geq 0.
    \end{align}
    This ends the proof.
\end{proof}

\newpage
In the context of Theorem \ref{thm:completely.monotonic}, note that
\begin{equation}\label{eq:g.multinomial.coef.form}
    \begin{aligned}
        g(t)
        &= \frac{\prod_{i=1}^m \Gamma(\alpha_i t + 1) \prod_{j=1}^n \Gamma(\beta_j t + 1)}{\prod_{i=1}^m \prod_{j=1}^n \Gamma(\lambda_{ij} t + 1)} \\[1mm]
        &= \prod_{i=1}^m {\alpha_i t \choose \lambda_{i1} t, \lambda_{i2} t, \dots, \lambda_{in} t} \prod_{j=1}^n {\beta_j t \choose \lambda_{1j} t, \lambda_{2j} t, \dots, \lambda_{mj} t} \prod_{i=1}^m \prod_{j=1}^n \Gamma(\lambda_{ij} t + 1).
    \end{aligned}
\end{equation}
We are now ready to prove the new combinatorial inequalities.

\begin{corollary}\label{cor:combinatorial.inequality}
    Let $\nu\in \N$. For all $k\in \{1,2,\ldots,\nu\}$, choose $t_k\in (0,\infty)$ and let $\mu_k\in (0,1)$ be such that $\sum_{k=1}^\nu \mu_k = 1$.
    The following inequalities hold : \\[-4mm]
    \begin{enumerate}[\qquad(a)]
        \item $g(\sum_{k=1}^\nu \mu_k t_k) \leq \prod_{k=1}^\nu g(t_k)^{\mu_k}$, where equality holds if and only if all the $t_k$'s are the same. \\[-3.5mm]
        \item $\prod_{k=1}^\nu g(t_k) < g(\sum_{k=1}^\nu t_k)$. \\[-2mm]
        \item If $t_1 \leq t_3$, then $g(t_1 + t_2) g(t_3) \leq g(t_1) g(t_2 + t_3)$, where equality holds if and only if $t_1 = t_3$.
    \end{enumerate}
    Using \eqref{eq:g.multinomial.coef.form}, we can also write the above inequalities with multinomial coefficients :
    \begin{enumerate}[(a')]
        \item If we assume further that $\mu_k = 1/\nu$ for all $k$, and denote $\tmean \leqdef \frac{1}{\nu} \sum_{k=1}^{\nu} t_k$, then
        \begin{equation*}
            \hspace{-3mm}
            \begin{aligned}
                &\left[
                    \begin{gathered}
                        \prod_{i=1}^m {\alpha_i \tmean \choose \lambda_{i1} \tmean, \lambda_{i2} \tmean, \dots, \lambda_{in} \tmean} \prod_{j=1}^n {\beta_j \tmean \choose \lambda_{1j} \tmean, \lambda_{2j} \tmean, \dots, \lambda_{mj} \tmean} \\
                        \cdot \prod_{i=1}^m \prod_{j=1}^n {\lambda_{ij} \tmean \choose \lambda_{ij} \tfrac{t_1}{\nu}, \lambda_{ij} \tfrac{t_2}{\nu}, \dots, \lambda_{ij} \tfrac{t_{\nu}}{\nu}}
                    \end{gathered}
                \right] \\[1mm]
                &\leq \prod_{k=1}^{\nu} \left[
                    \begin{gathered}
                        \prod_{i=1}^m {\alpha_i t_k \choose \lambda_{i1} t_k, \lambda_{i2} t_k, \dots, \lambda_{in} t_k} \prod_{j=1}^n {\beta_j t_k \choose \lambda_{1j} t_k, \lambda_{2j} t_k, \dots, \lambda_{mj} t_k} \\
                        \cdot \prod_{i=1}^m \prod_{j=1}^n {\lambda_{ij} t_k \choose \lambda_{ij} \frac{t_k}{\nu}, \lambda_{ij} \frac{t_k}{\nu}, \dots, \lambda_{ij} \frac{t_k}{\nu}}
                    \end{gathered}
                    \right]^{1/\nu},
            \end{aligned}
        \end{equation*}
        where equality holds if and only if all the $t_k$'s are the same.
        \item
        \begin{equation*}
            \begin{aligned}
                &\prod_{k=1}^{\nu} \left[\prod_{i=1}^m {\alpha_i t_k \choose \lambda_{i1} t_k, \lambda_{i2} t_k, \dots, \lambda_{in} t_k} \prod_{j=1}^n {\beta_j t_k \choose \lambda_{1j} t_k, \lambda_{2j} t_k, \dots, \lambda_{mj} t_k}\right] \\[1mm]
                &\leq \prod_{i=1}^m {\alpha_i \sum_{i=1}^{\nu} t_k \choose \lambda_{i1} \sum_{i=1}^{\nu} t_k, \lambda_{i2} \sum_{i=1}^{\nu} t_k, \dots, \lambda_{in} \sum_{i=1}^{\nu} t_k} \\[1mm]
                &\quad\cdot \prod_{j=1}^n {\beta_j \sum_{i=1}^{\nu} t_k \choose \lambda_{1j} \sum_{i=1}^{\nu} t_k, \lambda_{2j} \sum_{i=1}^{\nu} t_k, \dots, \lambda_{mj} \sum_{i=1}^{\nu} t_k} \cdot \prod_{i=1}^m \prod_{j=1}^n {\lambda_{ij} \sum_{k=1}^{\nu} t_k \choose \lambda_{ij} t_1, \lambda_{ij} t_2, \dots, \lambda_{ij} t_{\nu}}.
            \end{aligned}
        \end{equation*}
        \item
        If $t_1 \leq t_3$, then
        \begin{equation*}
            \begin{aligned}
                &\prod_{i=1}^m {\alpha_i (t_1 + t_2) \choose \lambda_{i1} (t_1 + t_2), \lambda_{i2} (t_1 + t_2), \dots, \lambda_{in} (t_1 + t_2)} {\alpha_i t_3 \choose \lambda_{i1} t_3, \lambda_{i2} t_3, \dots, \lambda_{in} t_3} \\[1mm]
                &\quad\cdot \prod_{j=1}^n {\beta_j (t_1 + t_2) \choose \lambda_{1j} (t_1 + t_2), \lambda_{2j} (t_1 + t_2), \dots, \lambda_{mj} (t_1 + t_2)} {\beta_j t_3 \choose \lambda_{1j} t_3, \lambda_{2j} t_3, \dots, \lambda_{mj} t_3} \\[1mm]
                &\qquad\cdot \prod_{i=1}^m \prod_{j=1}^n {\lambda_{ij} (t_1 + t_2 + t_3) \choose \lambda_{ij} t_1, \lambda_{ij} (t_2 + t_3)} \\[1mm]
                &\leq \prod_{i=1}^m {\alpha_i t_1 \choose \lambda_{i1} t_1, \lambda_{i2} t_1, \dots, \lambda_{in} t_1} {\alpha_i (t_2 + t_3) \choose \lambda_{i1} (t_2 + t_3), \lambda_{i2} (t_2 + t_3), \dots, \lambda_{in} (t_2 + t_3)} \\[1mm]
                &\quad\cdot \prod_{j=1}^n {\beta_j t_1 \choose \lambda_{1j} t_1, \lambda_{2j} t_1, \dots, \lambda_{mj} t_1} {\beta_j (t_2 + t_3) \choose \lambda_{1j} (t_2 + t_3), \lambda_{2j} (t_2 + t_3), \dots, \lambda_{mj} (t_2 + t_3)} \\[1mm]
                &\qquad\cdot \prod_{i=1}^m \prod_{j=1}^n {\lambda_{ij} (t_1 + t_2 + t_3) \choose \lambda_{ij} (t_1 + t_2), \lambda_{ij} t_3},
            \end{aligned}
        \end{equation*}
        where equality holds if and only if $t_1 = t_3$.
    \end{enumerate}
\end{corollary}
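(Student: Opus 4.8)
The whole corollary will follow from one structural fact extracted from Theorem \ref{thm:completely.monotonic}, namely that $g$ is log-convex on $(0,\infty)$. First I would record this: since $(-\log g)'$ is completely monotonic, Definition \ref{def:complete.monotonicity} with $k=1$ gives $(-\log g)'' \le 0$, i.e.\ $(\log g)'' \ge 0$; in fact, equation \eqref{eq:prop:completely.monotonic.almost.equation} from the proof of Theorem \ref{thm:completely.monotonic}, taken at $k=1$, yields $(\log g)''(t) = -h''(t) > 0$ for all $t>0$, so $f \leqdef \log g$ is \emph{strictly} convex on $(0,\infty)$. I would also note that $g(0) = 1$ (because $\Gamma(1)=1$) and that $g$ is continuous at $0$, so $f$ extends continuously to $[0,\infty)$ with $f(0)=0$, is convex there, and, being affine on no subinterval, is strictly convex on all of $[0,\infty)$.

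With this in hand, Part (a) is just Jensen's inequality applied to the convex function $f$ and the weights $\mu_k$, the equality case coming from strict convexity. For Part (b), after taking logarithms the claim is $\sum_k f(t_k) < f(\sum_k t_k)$, which I would prove by first establishing superadditivity for two arguments — from $f(0)=0$ and strict convexity, $f(x) < \tfrac{x}{x+y}f(x+y)$ and $f(y) < \tfrac{y}{x+y}f(x+y)$, then add — and then inducting on $\nu$ (the constraints $\mu_k\in(0,1)$, $\sum_k\mu_k=1$ force $\nu\ge 2$, so the base case is $\nu=2$). Part (c) I would handle via the ``balanced pair'' monotonicity of convex functions: the pairs $\{t_1+t_2,\,t_3\}$ and $\{t_1,\,t_2+t_3\}$ share the sum $S \leqdef t_1+t_2+t_3$; writing their elements as $S/2\pm s_1$ and $S/2\pm s_2$ with $s_1 = |t_1+t_2-t_3|/2$ and $s_2 = (t_2+t_3-t_1)/2$, the hypothesis $t_1\le t_3$ together with $t_2>0$ gives $0\le s_1\le s_2 < S/2$ (a short case split on the sign of $t_1+t_2-t_3$), and since $s\mapsto f(S/2+s)+f(S/2-s)$ has derivative $f'(S/2+s)-f'(S/2-s)\ge 0$ — strictly positive for $s>0$, by strict convexity — the inequality $f(t_1+t_2)+f(t_3) \le f(t_1)+f(t_2+t_3)$ follows, with equality iff $s_1=s_2$, i.e.\ (tracking the same case split) iff $t_1=t_3$.

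Finally, I would obtain the primed inequalities (a'), (b'), (c') mechanically by substituting the identity \eqref{eq:g.multinomial.coef.form} for $g$ into (a), (b), (c) and regrouping the surviving $\Gamma$-factors into multinomial coefficients; for instance in (a') one uses $\lambda_{ij}\tmean = \sum_k \lambda_{ij}(t_k/\nu)$ and $\lambda_{ij}t_k = \nu\cdot\lambda_{ij}(t_k/\nu)$ to see that the ratio of the displayed $\lambda$-multinomial coefficients is exactly $\Gamma(\lambda_{ij}\tmean+1)\big/\prod_k\Gamma(\lambda_{ij}t_k+1)^{1/\nu}$, and similarly for (b') and (c'), where the extra $\lambda$-multinomial coefficients absorb the leftover products of $\Gamma(\lambda_{ij}\,\cdot\,+1)$ factors.

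I do not expect a genuine obstacle here: once log-convexity (with strictness) is available, (a)--(c) are standard facts about convex functions. The points that need care are (i) keeping the strictness and equality conditions correct — particularly in (c), where the case split on the sign of $t_1+t_2-t_3$ must be tracked to pin down exactly when $s_1=s_2$ — and (ii) the bookkeeping that converts the $\Gamma$-products in (a)--(c) into the multinomial-coefficient form of (a')--(c').
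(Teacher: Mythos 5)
Your proposal is correct, and it reaches the corollary by a partly different route than the paper. For (a) you do exactly what the paper does: strict log-convexity of $g$, which follows from \eqref{eq:prop:completely.monotonic.almost.equation} with $k=1$, plus Jensen with the weights $\mu_k$ (including the equality case). For (b) and (c), however, the paper does not argue directly: it invokes Lemma 3 of \cite{MR3730425} for (b) (using that $g$ is differentiable with $g(0)=1$, positive, decreasing and strictly log-convex) and says that (c) ``follows by adapting the proof of Corollary 3'' of the same reference. You instead give self-contained convexity arguments for $f=\log g$: superadditivity of a strictly convex function vanishing at $0$ for (b) (note your version does not even need monotonicity of $g$, only $f(0)=0$ and strict convexity, and you correctly observe that the hypothesis $\mu_k\in(0,1)$, $\sum_k\mu_k=1$ forces $\nu\ge 2$, without which (b) would be false), and for (c) the ``balanced pair'' argument comparing $\{t_1+t_2,t_3\}$ and $\{t_1,t_2+t_3\}$ through the function $s\mapsto f(S/2+s)+f(S/2-s)$, whose strict monotonicity pins down the equality case $t_1=t_3$ exactly as stated. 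Both routes are sound; the paper's is shorter because it outsources the convexity lemmas, while yours is self-contained and makes the strictness/equality conditions transparent. The only point to write out carefully is your passing claim that strict convexity of $f$ extends to the endpoint $0$ (equivalently, that the chord slope $t\mapsto f(t)/t$ is strictly increasing): this follows because equality in the chord inequality would force $f$ to be affine on a subinterval of $(0,\infty)$, contradicting $f''>0$ there, which is the standard argument you gesture at. Your treatment of (a'), (b'), (c') — substituting \eqref{eq:g.multinomial.coef.form} and checking that the extra $\lambda$-multinomial coefficients account precisely for the leftover $\Gamma(\lambda_{ij}\cdot+1)$ factors — is the same bookkeeping the paper leaves implicit, and your identification of the ratio of the $\lambda$-coefficients in (a') as $\Gamma(\lambda_{ij}\tmean+1)\big/\prod_k\Gamma(\lambda_{ij}t_k+1)^{1/\nu}$ is correct.
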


\begin{proof}[Proof of Corollary \ref{cor:combinatorial.inequality}]
    By \eqref{eq:prop:completely.monotonic.almost.equation}, $g$ is strictly log-convex, which implies $(a)$ by definition. Point $(b)$ follows from Lemma 3 in \cite{MR3730425} because $g$ is differentiable on $[0,\infty)$, $g(0) = 1$ and $g$ is (strictly) positive, (strictly) decreasing and strictly log-convex on $(0,\infty)$. Point $(c)$ follows by adapting the proof of Corollary 3 in \cite{MR3730425} using \eqref{eq:prop:completely.monotonic.almost.equation}.
\end{proof}

\appendix
\section{A technical lemma}\label{sec:tech.lemma}

We needed the following key inequality in the proof of Theorem \ref{thm:completely.monotonic}.

\begin{lemma}\label{lem:Alzer.generalization}
    For $m,n\in \N$, let $0 < U_i, V_j, u_{ij} \leq 1$ be such that
    \begin{equation*}
        \sum_{j=1}^n u_{ij} = U_i, \qquad \sum_{i=1}^m u_{ij} = V_j,
    \end{equation*}
    and
    \begin{equation*}
        \sum_{i=1}^m \sum_{j=1}^n u_{ij} = \sum_{i=1}^m U_i = \sum_{j=1}^n \sum_{i=1}^m u_{ij} = \sum_{j=1}^n V_j = 1.
    \end{equation*}
    Then, for any given $y > 1$,
    \begin{equation}\label{eq:lem:Alzer.generalization}
        \mathcal{J}_{(u_{ij})}(y) \leqdef \sum_{i=1}^m \frac{1}{y^{1/U_i} - 1} + \sum_{j=1}^n \frac{1}{y^{1/V_j} - 1} - \sum_{i=1}^m \sum_{j=1}^n \frac{1}{y^{1/u_{ij}} - 1} > 0,
    \end{equation}
    where $(u_{ij})$ is a shorthand for the matrix $(u_{ij})_{1 \leq i \leq m; 1 \leq j \leq n}$.
\end{lemma}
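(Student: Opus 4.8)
The plan is to reduce the inequality \eqref{eq:lem:Alzer.generalization} to the superadditivity of a single one-variable function. Fix $y > 1$, set $c \leqdef \log y > 0$, and for $x > 0$ write
\[
    F(x) \leqdef \frac{1}{y^{1/x} - 1} = \frac{1}{e^{c/x} - 1}.
\]
Then $F > 0$ on $(0,\infty)$ and $F(x) = O(e^{-c/x}) \to 0$ as $x \to 0^+$, so $F$ extends continuously to $[0,\infty)$ with $F(0) = 0$. In this notation the claim reads
\[
    \mathcal{J}_{(u_{ij})}(y) = \sum_{i=1}^m F(U_i) + \sum_{j=1}^n F(V_j) - \sum_{i=1}^m \sum_{j=1}^n F(u_{ij}) > 0.
\]

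The one genuinely analytic step I would carry out first is to show that $F$ is strictly convex on $(0,\infty)$. Substituting $t = 1/x$ and setting $G(t) \leqdef (e^{ct} - 1)^{-1}$, a short computation gives $F''(x) = t^3\bigl(t\,G''(t) + 2\,G'(t)\bigr)$, and differentiating $G$ twice turns $t\,G''(t) + 2\,G'(t)$ into $c\,e^{ct}(e^{ct}-1)^{-3}\,N(ct)$ with $N(w) \leqdef e^w(w-2) + w + 2$. Since $N(0) = N'(0) = 0$ and $N''(w) = w\,e^w > 0$ for $w > 0$, we get $N(w) > 0$ for $w > 0$, hence $F'' > 0$ on $(0,\infty)$.

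With convexity in hand, superadditivity is immediate: for $a, b > 0$, writing $a = \tfrac{a}{a+b}(a+b) + \tfrac{b}{a+b}\cdot 0$ and using $F(0) = 0$ gives $F(a) \le \tfrac{a}{a+b}F(a+b)$, and symmetrically $F(b) \le \tfrac{b}{a+b}F(a+b)$, so $F(a) + F(b) \le F(a+b)$. Iterating along a fixed row, $\sum_{j=1}^n F(u_{ij}) \le F\bigl(\sum_{j=1}^n u_{ij}\bigr) = F(U_i)$, and summing over $i$ gives $\sum_{i,j} F(u_{ij}) \le \sum_{i=1}^m F(U_i)$; the symmetric argument along columns gives $\sum_{i,j} F(u_{ij}) \le \sum_{j=1}^n F(V_j)$. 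Adding the two bounds and subtracting $\sum_{i,j} F(u_{ij})$ leaves $\mathcal{J}_{(u_{ij})}(y) \ge \sum_{i,j} F(u_{ij})$, and this is strictly positive because each $u_{ij} > 0$ forces $F(u_{ij}) > 0$. (Incidentally this shows the hypotheses $u_{ij} \le 1$ and $\sum_i U_i = \sum_j V_j = 1$ are not used: convexity of $F$ holds on the whole half-line.)

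The hard part will be the convexity of $F$ — essentially the inequality $N(w) > 0$, equivalently $w\coth(w/2) > 2$ for $w > 0$ — together with pinning down the boundary behaviour at $0$ well enough to legitimately use $F(0) = 0$ in the convex-combination step (the bound $F(x) = O(e^{-c/x})$ takes care of that). Everything after that is routine bookkeeping with Jensen's inequality for the convex function $F$ applied row by row and column by column.
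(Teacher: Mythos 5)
Your proof is correct, but it takes a genuinely different route from the paper's. Both arguments ultimately rest on the same one-variable fact --- the strict convexity of $c \mapsto (y^{1/c}-1)^{-1}$ on $(0,\infty)$ for $y>1$, which you reprove via $N(w)=e^w(w-2)+w+2>0$ and which the paper imports from Alzer's computation \eqref{eq:lem:Alzer.generalization.Alzer.computation} --- but they exploit it quite differently. The paper fixes the margins $U_i,V_j$, views $-\mathcal{J}_{(u_{ij})}(y)$ as a function of the interior variables $(u_{ij})_{1\le i\le m-1,\,1\le j\le n-1}$, decomposes its Hessian into positive (semi-)definite blocks whose entries are the one-dimensional second derivatives $a_{ij}>0$, concludes strict concavity of $\mathcal{J}$ on the open polytope $\mathcal{O}$, and finishes by evaluating $\mathcal{J}$ at the extremal points of the closure, where it equals $(y-1)^{-1}>0$. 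You instead use the convexity scalar-wise: since $F(x)=(y^{1/x}-1)^{-1}$ is convex with $F(0^+)=0$ (and your bound $F(x)=O(e^{-c/x})$ does legitimize the convex extension to $[0,\infty)$), $F$ is superadditive, so iterating along rows and columns gives $\sum_{i,j}F(u_{ij})\le\sum_i F(U_i)$ and $\sum_{i,j}F(u_{ij})\le\sum_j F(V_j)$, hence $\mathcal{J}_{(u_{ij})}(y)\ge\sum_{i,j}F(u_{ij})>0$; note that strictness comes from $F(u_{ij})>0$, so non-strict superadditivity suffices, and your bookkeeping is sound. Your route is shorter, avoids the most delicate step of the paper (the identification of the extremal points of the closure of $\mathcal{O}$ and the boundary evaluation), yields the quantitative lower bound $\mathcal{J}_{(u_{ij})}(y)\ge\sum_{i,j}F(u_{ij})$, and, as you observe, never uses the hypotheses $u_{ij},U_i,V_j\le 1$ or $\sum_i U_i=\sum_j V_j=1$, only the marginal identities and positivity, so it proves a slightly more general statement (which still covers the normalized case needed in Theorem \ref{thm:completely.monotonic}). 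What the paper's Hessian approach buys in exchange is the structural fact that $\mathcal{J}_{(u_{ij})}(y)$ is strictly concave in the interior variables, which is more than the inequality itself requires.
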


\newpage
\begin{proof}
    First, we write $-\mathcal{J}_{(u_{ij})}(y)$ as a function of the variables $(u_{ij})_{1 \leq i \leq m-1; 1 \leq j \leq n-1}$ when viewing the $U_i$'s and $V_j$'s as fixed :
    \begin{align}\label{eq:lem:Alzer.generalization.rewrite}
        -\mathcal{J}_{(u_{ij})}(y)
        &= \sum_{i=1}^m \sum_{j=1}^n \frac{1}{y^{1/u_{ij}} - 1} \hspace{-20mm}\underbrace{- \, \sum_{i=1}^m \frac{1}{y^{1/U_i} - 1} - \, \sum_{j=1}^n \frac{1}{y^{1/V_j} - 1}}_{~~~~~~~~~~~~~~~~~\text{this is independent of $(u_{ij})_{1 \leq i \leq m-1; 1 \leq j \leq n-1}$; call it $C(y)$}} \notag \\
        &= \sum_{i=1}^{m-1} \sum_{j=1}^{n-1} \frac{1}{y^{1/u_{ij}} - 1} + \sum_{i=1}^{m-1} \frac{1}{y^{1/u_{in}} - 1} + \sum_{j=1}^{n-1} \frac{1}{y^{1/u_{mj}} - 1} + \frac{1}{y^{1/u_{mn}} - 1} + C(y) \notag \\[5mm]
        &= \sum_{i=1}^{m-1} \sum_{j=1}^{n-1} \frac{1}{y^{1/u_{ij}} - 1} + \sum_{i=1}^{m-1} \frac{1}{y^{1/(U_i - \sum_{j=1}^{n-1} u_{ij})} - 1} + \sum_{j=1}^{n-1} \frac{1}{y^{1/(V_j - \sum_{i=1}^{m-1} u_{ij})} - 1} \notag \\
        &\qquad+ \frac{1}{y^{1/(V_n - \sum_{i=1}^{m-1} U_i + \sum_{i=1}^{m-1} \sum_{j=1}^{n-1} u_{ij})} - 1} + C(y).
    \end{align}
    From the proof of Lemma 1 in \cite{MR3730425}, we know that $\frac{\partial^2}{\partial c^2} (y^{1/c} - 1)^{-1} > 0$ for all $c\in (0,1)$.
    For convenience, here are the computations (with $t = y^{1/c}$, and $y > 1$ by assumption) :
    \begin{align}\label{eq:lem:Alzer.generalization.Alzer.computation}
        \frac{c^2 (t - 1)^3}{2 t (\log t)^2} \frac{\partial^2}{\partial c^2} (y^{1/c} - 1)^{-1}
        = \frac{t + 1}{2} - \frac{t - 1}{\log t}
        = \frac{t + 1}{2 \log t} \int_1^t \frac{(s - 1)^2}{s (s + 1)^2} \rd s > 0.
    \end{align}
    Therefore, everywhere in the open set
    \begin{equation}
        \mathcal{O} = \Big\{(u_{ij})_{1 \leq i \leq m-1; 1 \leq j \leq n-1}\in (0,\infty)^{(m-1) \times (n-1)} : \sum_{i=1}^{m-1} u_{ij} < V_j, \sum_{j=1}^{n-1} u_{ij} < U_i\Big\},
    \end{equation}
    we have (for $1 \leq k \neq k' \leq m-1$ and $1 \leq \ell \neq \ell' \leq n-1$) :
    \begin{align}
        \frac{\partial^2}{\partial u_{k\ell}\partial{u_{k\ell}}} \big[-\mathcal{J}_{(u_{ij})}(y)\big]
        &= \frac{\partial^2}{\partial u_{k\ell}\partial{u_{k\ell}}} \frac{1}{y^{1/u_{k\ell}} - 1} \notag \\
        &\quad+ \frac{\partial^2}{\partial u_{k\ell}\partial{u_{k\ell}}} \frac{1}{y^{1/(U_k - \sum_{j=1}^{n-1} u_{kj})} - 1} \notag \\
        &\quad+ \frac{\partial^2}{\partial u_{k\ell}\partial{u_{k\ell}}} \frac{1}{y^{1/(V_{\ell} - \sum_{i=1}^{m-1} u_{i\ell})} - 1} \notag \\
        &\quad+ \frac{\partial^2}{\partial u_{k\ell}\partial{u_{k\ell}}} \frac{1}{y^{1/(V_n - \sum_{i=1}^{m-1} U_i + \sum_{i=1}^{m-1} \sum_{j=1}^{n-1} u_{ij})} - 1} \notag \\[1mm]
        &= a_{k\ell} + a_{kn} + a_{m\ell} + a_{mn}, \\[1mm]
        \frac{\partial^2}{\partial u_{k\ell}\partial{u_{k\ell'}}} \big[-\mathcal{J}_{(u_{ij})}(y)\big]
        &= \frac{\partial^2}{\partial u_{k\ell}\partial{u_{k\ell'}}} \frac{1}{y^{1/(U_k - \sum_{j=1}^{n-1} u_{kj})} - 1} \notag \\
        &\quad+ \frac{\partial^2}{\partial u_{k\ell}\partial{u_{k\ell'}}} \frac{1}{y^{1/(V_n - \sum_{i=1}^{m-1} U_i + \sum_{i=1}^{m-1} \sum_{j=1}^{n-1} u_{ij})} - 1} \notag \\[1mm]
        &= a_{kn} + a_{mn}, \\[1mm]
        \frac{\partial^2}{\partial u_{k\ell}\partial{u_{k'\ell}}} \big[-\mathcal{J}_{(u_{ij})}(y)\big]
        &= \frac{\partial^2}{\partial u_{k\ell}\partial{u_{k'\ell}}} \frac{1}{y^{1/(V_{\ell} - \sum_{i=1}^{m-1} u_{i\ell})} - 1} \notag \\
        &\quad+ \frac{\partial^2}{\partial u_{k\ell}\partial{u_{k'\ell}}} \frac{1}{y^{1/(V_n - \sum_{i=1}^{m-1} U_i + \sum_{i=1}^{m-1} \sum_{j=1}^{n-1} u_{ij})} - 1} \notag \\[1mm]
        &= a_{m\ell} + a_{mn}, \\[1mm]
        \frac{\partial^2}{\partial u_{k\ell}\partial{u_{k'\ell'}}} \big[-\mathcal{J}_{(u_{ij})}(y)\big]
        &= \frac{\partial^2}{\partial u_{k\ell}\partial{u_{k'\ell'}}} \frac{1}{y^{1/(V_n - \sum_{i=1}^{m-1} U_i + \sum_{i=1}^{m-1} \sum_{j=1}^{n-1} u_{ij})} - 1} \notag \\[1mm]
        &= a_{mn},
    \end{align}
    where $a_{ij} > 0$ for all $1 \leq i \leq m$, $1 \leq j \leq n$, on $\mathcal{O}$ by \eqref{eq:lem:Alzer.generalization.Alzer.computation}.
    In other words, the Hessian matrix of $-\mathcal{J}_{(u_{ij})}(y)$, as a function of the variables $(u_{ij})_{1 \leq i \leq m-1; 1 \leq j \leq n-1}$, is equal to
    \begingroup
    \setlength\arraycolsep{1.5pt}
    \begin{align}\label{eq:hessian.matrix.decomposition}
        &
        \begin{pmatrix}
            \boldsymbol{A}_1 &\boldsymbol{0} &\cdots &\boldsymbol{0} \\
            \boldsymbol{0} &\boldsymbol{A}_2 &\ddots &\vdots \\
            \vdots &\ddots &\ddots &\boldsymbol{0} \\[1mm]
            \boldsymbol{0} &\cdots &\boldsymbol{0} &\boldsymbol{A}_{m-1}
        \end{pmatrix}
        +
        \begin{pmatrix}
            \boldsymbol{B}_1 &\boldsymbol{0} &\cdots &\boldsymbol{0} \\
            \boldsymbol{0} &\boldsymbol{B}_2 &\ddots &\vdots \\
            \vdots &\ddots &\ddots &\boldsymbol{0} \\[1mm]
            \boldsymbol{0} &\cdots &\boldsymbol{0} &\boldsymbol{B}_{m-1}
        \end{pmatrix}
        +
        \begingroup
        \setlength\arraycolsep{4pt}
        \begin{pmatrix}
            \boldsymbol{C} &\boldsymbol{C} &\cdots &\boldsymbol{C} \\
            \boldsymbol{C} &\boldsymbol{C} &\ddots &\vdots \\
            \vdots &\ddots &\ddots &\boldsymbol{C} \\[1mm]
            \boldsymbol{C} &\cdots &\boldsymbol{C} &\boldsymbol{C}
        \end{pmatrix}
        \endgroup
        +
        a_{mn} \bb{1}_{(m-1)(n-1)} \notag \\[3mm]
        &\leqdef \mathrm{(I)} + \mathrm{(II)} + \mathrm{(III)} + \mathrm{(IV)}.
    \end{align}
    \endgroup
    where $\boldsymbol{A}_i = \text{diag}((a_{ij})_{1 \leq j \leq n-1})$, $\boldsymbol{B}_i = a_{in} \bb{1}_{(n-1)}$, $\boldsymbol{C} = \text{diag}((a_{mj})_{1 \leq j \leq n-1})$ and $\bb{1}_{\mu}$ denotes the $\mu\times \mu$ matrix of ones.
    Since all the $a_{ij}$'s are positive on $\mathcal{O}$, it is easy to verify that $\mathrm{(I)}$ is positive definite and $\mathrm{(II)}$, $\mathrm{(III)}$ and $\mathrm{(IV)}$ are positive semi-definite.
    Indeed, for any non-zero vector $\bb{x}\in \R^{(m-1)(n-1)}\backslash \{\bb{0}\}$, write it as the vertical concatenation of the column vectors $(\bb{x}_i)_{1 \leq i \leq m-1}$ where $\bb{x}_i\leqdef (x_{ij})_{1 \leq j \leq n-1}$, then
    \begin{align*}
        \bb{x}^{\top}\mathrm{(I)} \, \bb{x}
        &= \sum_{i=1}^{m-1} \bb{x}_i^{\top} \boldsymbol{A}_i \bb{x}_i = \sum_{i=1}^{m-1} \sum_{j=1}^{n-1} a_{ij} x_{ij}^2 > 0, \\[2mm]
        \bb{x}^{\top}\mathrm{(II)} \, \bb{x}
        &= \sum_{i=1}^{m-1} \bb{x}_i^{\top} \boldsymbol{B}_i \bb{x}_i = \sum_{i=1}^{m-1} \sum_{j=1}^{n-1} \sum_{j'=1}^{n-1} a_{in} x_{ij}x_{ij'}
        = \sum_{i=1}^{m-1} a_{in} \Big(\sum_{j=1}^{n-1} x_{ij}\Big)^2 \geq 0, \\[2mm]
        \bb{x}^{\top}\mathrm{(III)} \, \bb{x}
        &= \sum_{i=1}^{m-1} \sum_{i'=1}^{m-1} \bb{x}_i^{\top} \boldsymbol{C} \bb{x}_{i'} = \sum_{i=1}^{m-1} \sum_{i'=1}^{m-1} \sum_{j=1}^{n-1} a_{mj} x_{ij}x_{i'j}
        = \sum_{j=1}^{n-1} a_{mj} \Big(\sum_{i=1}^{m-1} x_{ij}\Big)^2 \geq 0, \\[2mm]
        \bb{x}^{\top}\mathrm{(IV)} \, \bb{x} &= a_{mn} \Big(\sum_{i=1}^{m-1} \sum_{j=1}^{n-1} x_{ij}\Big)^2 \geq 0.
    \end{align*}
    By linearity, this means that the Hessian matrix of $-\mathcal{J}_{(u_{ij})}(y)$ is positive definite.
    Since the second-order partial derivatives are continuous on the open and convex set $\mathcal{O}$, it implies that $\mathcal{J}_{(u_{ij})}(y)$, as a function of the variables $(u_{ij})_{1 \leq i \leq m-1; 1 \leq j \leq n-1}$, is strictly concave on $\mathcal{O}$.
    A strictly concave function on a convex set minimizes at the extremal points of its closure.
    Here, these are the points $(u_{ij})_{1 \leq i \leq m-1; 1 \leq j \leq n-1}$ such that $u_{i^{\star}j^{\star}} = 1$ for some $1 \leq i^{\star} \leq m-1$ and $1 \leq j^{\star} \leq n-1$, and such that $u_{ij} = 0$ for all other $i \neq i^{\star}$ and $j \neq j^{\star}$.
    It is easy to verify that $\mathcal{J}_{(u_{ij})}(y) = (y - 1)^{-1} > 0$ in \eqref{eq:lem:Alzer.generalization} for any such point.
    Hence, $\mathcal{J}_{(u_{ij})}(y) > 0$ on $\mathcal{O}$, which was our claim.
\end{proof}

\section*{Acknowledgement}

We thank the referee for his careful review of the manuscript, and we thank P.\ Da Silva for helping us closing the argument in the proof of Lemma \ref{lem:Alzer.generalization}.

\section*{Disclosure statement}

I have no conflict of interest to disclose.

\section*{Funding}

F. Ouimet is supported by a postdoctoral fellowship from the NSERC (PDF) and a postdoctoral fellowship supplement from the FRQNT (B3X).

%
%


\def\bibindent{3em}

\bibliographystyle{authordate1}
\bibliography{Ouimet_2019_ratio_gammas_bib}

\end{document}